\newtheorem{theorem}{Theorem}[section]
\newtheorem{lemma}[theorem]{Lemma}
\newtheorem{prop}[theorem]{Proposition}
\newtheorem{remark}[theorem]{Remark}
\newtheorem{example}[theorem]{Example}
\newcommand{\prob}{\stackrel{P}{\longrightarrow}}
\newcommand{\eid}{\stackrel{d}{=}}
\newcommand{\one}{{\bf 1}}
\newcommand{\reals}{{\mathbb R}}
\newcommand{\bbr}{\reals}
\newcommand{\vep}{\varepsilon}
\newcommand{\bbz}{\protect{\mathbb Z}}
\newcommand{\qbin}{{\protect{\mathbb Q}}_{bin}}
\numberwithin{equation}{section}
\begin{document}

\title[Stationary S$\alpha$S random fields]{Group theoretic dimension of stationary symmetric $\alpha$-stable random fields}
\author[A. Chakrabarty]{Arijit Chakrabarty}
\address{Statistics and Mathematics Unit,
Indian Statistical Institute, Delhi, India}
\email{arijit@isid.ac.in}
\author[P. Roy]{Parthanil Roy}
\address{Statistics and Mathematics Unit,
Indian Statistical Institute, Kolkata, India}
\email{parthanil.roy@gmail.com}

\begin{abstract}
The growth rate of the partial maximum of a stationary stable process was first studied in the works of \cite{samorodnitsky:2004a, samorodnitsky:2004b}, where it was established, based on the seminal works of \cite{rosinski:1995, rosinski:2000}, that the growth rate is connected to the ergodic theoretic properties of the flow that generates the process. The results were generalized to the case of stable random fields indexed by $\bbz^d$ in \cite{roy:samorodnitsky:2008}, where properties of the group of nonsingular transformations generating the stable process were studied as an attempt to understand the growth rate of the partial maximum process.
This work generalizes this connection between stable random fields and group theory to the continuous parameter case, that is, to the fields indexed by $\bbr^d$.
\end{abstract}

\subjclass[2000] {Primary 60G60; Secondary 60G70, 60G52, 37A40.}
\keywords{ Random field, stable process, extreme value theory, maxima, ergodic theory, group action.\vspace{.5ex}}
\thanks{Arijit Chakrabarty's research was partially supported by the Centenary Postdoctoral Fellowship at the Indian Institute of Science and Parthanil Roy's research was partially supported by a start up grant from the Michigan State University.}

\maketitle

\section{Introduction}\label{sec:intro}
This paper investigates the growth rate of the partial maxima of stationary symmetric stable non-Gaussian random fields indexed by $\bbr^d$. Let ${\bf X}:=\{X_t:t\in\bbr^d\}$ be a measurable and stationary symmetric $\alpha$-stable (S$\alpha$S) random field with $0<\alpha<2$. This means that for all $c_1, c_2,
\ldots, c_k \in \mathbb{R}$, and, $t, t_1,t_2,\ldots,t_k \in
\mathbb{R}^d$, $\sum_{j=1}^k c_j X_{t_j+t}$ follows a symmetric
$\alpha$-stable distribution that does not depend on $t$. For further reference on S$\alpha$S distributions and
processes, the reader is referred to \cite{samorodnitsky:taqqu:1994}.

Stationarity ensures that the law of the random field ${\bf X}$ is invariant
under the shift action of the group $\mathbb{R}^d$ on the index-parameter of the field. This group action, when viewed in the space of integral representations with respect to S$\alpha$S random measures (see \cite{samorodnitsky:taqqu:1994}), is not necessarily invariant but remains nonsingular. This was established in the seminal works of \cite{rosinski:1995} (for $d=1$) and \cite{rosinski:2000} (for $d>1$).

The nonsingular group action obtained in \cite{rosinski:1995, rosinski:2000} plays a very important role in the behavior of extremes of stationary S$\alpha$S random fields. This connection was first explored in the one-dimensional case in \cite{samorodnitsky:2004a, samorodnitsky:2004b}, which was later generalized for any $d \geq 2$ in \cite{roy:samorodnitsky:2008} and \cite{roy:2010b}. These works dealt with the partial maxima process of stationary S$\alpha$S random fields when the index parameter runs over a $d$-dimensional hypercube of length increasing to infinity. The rate of growth of this maxima process was exactly calculated when the underlying group action is not conservative. In the case of conservative actions, however, only an upper estimate on the rate of growth could be given in general.

In some discrete multiparameter cases, using the group theoretic structures of the underlying group action, a better estimate on the rate (sometimes the exact rate) of growth of the partial maxima sequence has been given in Section~5 of \cite{roy:samorodnitsky:2008}. The current paper extends this connection with group theory to the continuous parameter case by approximating the random field ${\bf X}$ by its discrete parameter skeletons.

This paper is organized as follows. In Section~\ref{sec:prelim}, we present some preliminaries. Section~\ref{sec:main:result} contains the main result of this paper and a couple of examples. The main result is then proved in Section~\ref{sec:proof} based on a bunch of ergodic theoretic Lemmas, whose proofs are presented in the Appendix.

\section{Preliminaries}\label{sec:prelim}

As mentioned above, nonsingular group actions play a significant role in the study of stationary stable random fields and hence, we start with a brief introduction to such actions. Let $(G,+)$ be a topological group with identity element $0$ and Borel $\sigma$-field $\mathcal{G}$, and $(S,\mathcal{S}, \mu)$ be a $\sigma$-finite measure space. A collection of measurable maps $\{\phi_t\}_{t \in G}$ on S is called a nonsingular $G$-action on $S$ if there exists $S^\prime \in \mathcal{S}$ with $\mu(S \setminus S^\prime)=0$ such that
\begin{enumerate}
\item $(t,s) \mapsto \phi_t(s)$ is a measurable map from $(G \times S^\prime,\, \mathcal{G} \otimes \mathcal{S}^\prime)$ to $(S^\prime, \mathcal{S}^\prime)$ (here $\mathcal{S}^\prime$ is the restriction of the $\sigma$-field $\mathcal{S}$ on $S^\prime$),
\item $\phi_0(s)=s$ for all $s \in S^\prime$,
\item $\phi_{t_1+t_2}(s)=\phi_{t_1} \circ \phi_{t_2}(s)$ for all $t_1, t_2 \in G$ and $s \in S^\prime$, and
\item $\mu \circ \phi_t \sim \mu$ for all $t \in G$ (here ``$\sim$'' denotes equivalence of measures).
\end{enumerate}
\noindent See, for instance, \cite{aaronson:1997}, \cite{krengel:1985}, \cite{varadarajan:1970} and \cite{zimmer:1984} for detailed discussions on nonsingular (also known as quasi-invariant) group actions.

If $G$ is countable then $W \in \mathcal{S}$ is called a wandering set if $\{\phi_t(W):\,t \in G\}$ is a pairwise disjoint collection and $\{\phi_t\}_{t \in G}$ is called conservative if it does not admit any wandering set of positive $\mu$-measure. On the other hand, for $G=\mathbb{R}^d$, it can be shown, based on a result of \cite{kolodynski:rosinski:2003}, that if the restriction $\{\phi_t\}_{t \in \mathbb{Z}^d}$ is conservative then so are $\{\phi_t\}_{t \in 2^{-i}\mathbb{Z}^d}$ for all $i=1,2,\ldots$ ; see Proposition~2.1 in \cite{roy:2010b}. Therefore, the group action $\{\phi_t\}_{t \in \mathbb{R}^d}$ can be defined to be conservative in this case.

We now present the connection between structures of stationary S$\alpha$S random fields and nonsingular group actions. It is known that any measurable S$\alpha$S random field $\mathbf{X}=\{X_t:t\in\bbr^d\}$ (not necessarily stationary) has an integral representation given by
\begin{equation}\label{intro.eq0}
\{X_t:t\in\bbr^d\}\eid\left\{\int_{S}f_t(s)\tilde M(ds):t\in\bbr^d\right\}\,,
\end{equation}
where $\tilde M$ is a S$\alpha$S random measure on some standard Borel space $(S,{\mathcal S})$ with a $\sigma$-finite control measure $\mu$, $f_t\in L^\alpha(S,\mu)$ for all $t\in\bbr^d$ and $(t,s) \mapsto f_t(s)$ is a jointly measurable map; see \cite{samorodnitsky:taqqu:1994} and \cite{rosinski:woyczynski:1986}. Without loss of generality we can assume that the full support condition
\begin{equation*}
support\left\{f_t:\,t \in \mathbb{R}^d\right\}=S \label{condition:full:support}
\end{equation*}
holds for any integral representation $\{f_t\}$ of $\mathbf{X}$.

The structure of stationary S$\alpha$S random fields has been studied in \cite{rosinski:1995, rosinski:2000}. In those works, it has been shown that the functions $f_t$ in \eqref{intro.eq0} can be chosen to be of the form
\begin{equation}\label{intro.eq1}
f_t(s)=c_t(s)\left(\frac{d\mu\circ\phi_t}{d\mu}(s)\right)^{1/\alpha}f\circ\phi_t(s),\,t\in\bbr^d\,,
\end{equation}
where $f\in L^\alpha(S,\mu)$, $\{\phi_t:t\in\bbr^d\}$ is a nonsingular group action of the group $\bbr^d$ on $S$ and $\{c_t:t\in\bbr^d\}$ is a cocycle, {\it i.e.}, $(t,s)\mapsto c_t(s)$ is a jointly measurable function from $\bbr^d\times S$ to $\{-1,1\}$ such that for all $u,v\in\bbr^d$, $c_{u+v}(s)=c_v(s)c_u(\phi_v(s))$ for $\mu$-a.a. $s\in S$. Conversely, $\{X_t: t\in\bbr^d\}$ defined by \eqref{intro.eq0} and \eqref{intro.eq1} is a stationary $S\alpha S$ random field.

\begin{remark} \label{remark:uniqueness:of:int:repn}
{\rm In fact, \cite{rosinski:1995, rosinski:2000} established that every minimal representation (see \cite{hardin:1982b}) of $\mathbf{X}$ is of the form \eqref{intro.eq1}. Although this Rosi\'{n}ski representation may not be unique, it has been established based on a rigidity result of $L^\alpha$ spaces (due to \cite{hardin:1981}) that if in one Rosi\`nski representation of $\mathbf{X}$, the underlying group action is conservative then so is the action in all Rosi\'{n}ski representations; see \cite{rosinski:1995, rosinski:2000}, \cite{roy:samorodnitsky:2008} and \cite{roy:2010b}. In other words, the spaces of stationary measurable S$\alpha$S random fields generated by conservative and nonconservative actions are disjoint.}
\end{remark}

Now we turn our attention to the extremes of stationary measurable S$\alpha$S random fields. To this end, we assume further that $\mathbf{X}$ is locally bounded (see, for example, \cite{samorodnitsky:taqqu:1994} for sufficient conditions for local boundedness of  $\mathbf{X}$). Since $\mathbf{X}$ is stationary and
measurable, it is continuous in probability by Proposition
3.1 in \cite{roy:2010b}. Therefore, as in the one-dimensional case in \cite{samorodnitsky:2004b}, we can take its separable version and define (avoiding the usual measurability problems) the finite-valued maxima process
\begin{equation}
M_t:=\sup_{s\in[-t\one,t\one]\cap\Gamma}|X_s|,\,t\ge 0, \label{def:M_t}
\end{equation}
where $\Gamma:=\cup_{n=0}^\infty\Gamma_n$ with $\Gamma_n:=2^{-n}\bbz^d$, $n\ge0$ and $[u,v]:=\{s \in \mathbb{R}^d:\,u \leq
s \leq v\}$ (the inequality should be interpreted componentwise).

As mentioned earlier, the rate of growth of $M_t$ was studied in \cite{samorodnitsky:2004b} and \cite{roy:2010b}, where it was established that as $t \to \infty$,
\[
t^{-d/\alpha} M_t \Rightarrow \left\{ \begin{array}{ll}
                0   & \mbox{ if $\{\phi_t\}_{t \in \mathbb{R}^d}$ is conservative},\\
                \mbox{Fr\'echet distribution} & \mbox{ if $\{\phi_t\}_{t \in \mathbb{R}^d}$ is not conservative,}
             \end{array}
     \right.
\]
where $\{\phi_t\}_{t \in \mathbb{R}^d}$ is as in \eqref{intro.eq1}. Note that the above facts are in agreement with Remark~\ref{remark:uniqueness:of:int:repn}. This phase transition can be argued to be a transition from longer to shorter memory as described in \cite{samorodnitsky:2004a}. In particular, this means that only an upper bound can be given on the rate of growth of the partial maxima process $M_t$ when the underlying action is conservative.

For the discrete parameter case when the underlying $\mathbb{Z}^d$-action is conservative, depending on the group theoretic properties of the underlying action, a better estimate of the rate of growth of the partial maxima sequence
$$
M^\prime_n:=\max_{s\in[-n\one,n\one]\cap\mathbb{Z}^d}|X_s|,\; n \ge 0\
$$
was given in \cite{roy:samorodnitsky:2008}. This work had the following key idea: instead of looking at $\{\phi_t\}$ as a $\mathbb{Z}^d$-action, look at it as a group
$$
A=\{\phi_v:\,v \in \mathbb{Z}^d\}
$$
of nonsingular transformations on $S$ in order to remove the redundancy in the action. This group $A$ happens to be a quotient group of $\mathbb{Z}^d$ and hence, by the structure theorem of finitely generated Abelian groups (see, for example, \cite{lang:2002}), can be decomposed as a direct sum of two subgroups, one of which is a free Abelian group $\bar{F}$ and the other is a finite group $\bar{N}$. The subgroup $\bar{N}$ corresponds to the cycles in the action and $\bar{F}$ being a free Abelian group has an isomorphic copy $F$ sitting inside $\mathbb{Z}^d$ which can be thought of as the effective index set of the random field $\{X_t\}$. See Section~\ref{sec:main:result} below for the details.

\cite{roy:samorodnitsky:2008} showed, using a counting argument based on \cite{deloera:2005}, that the restriction of the underlying action on $F$ reveals extra information on the rate of growth of $M^\prime_n$ if $p:=rank(F)< d$. More specifically, as $n \to \infty$,
\[
n^{-p/\alpha} M^\prime_n \Rightarrow \left\{ \begin{array}{ll}
                0   & \mbox{ if $\{\phi_t\}_{t \in F}$ is conservative},\\
                \mbox{Fr\'echet distribution} & \mbox{ if $\{\phi_t\}_{t \in F}$ is not conservative.}
             \end{array}
     \right.
\]
In the above set up, $p$ can be regarded as the effective dimension of the field. See also \cite{roy:2010a}, which investigates a deeper connection between effective dimension and extremes of discrete parameter stable fields.

This concept of effective dimension has not been extended to the continuous parameter case so far because in that case the group
$\{\phi_v:\,v \in \mathbb{R}^d\}$
of nonsingular transformations is not finitely generated and therefore the structure theorem of finitely generated Abelian groups cannot be used anymore. In this work, we remove this obstacle by observing that the effective dimensions of the discrete parameter subfields $\{X_t\}_{t \in 2^{-i}\bbz^d},\,i=0,1,\ldots$ are all equal (see Proposition~\ref{p2} below), which can be defined as the group theoretic dimension of $\mathbf{X}$. Our main goal is to investigate the relationship between this group theoretic dimension and the rate of growth of $M_t$.

\section{The Main Result}\label{sec:main:result}

In this section, we state the main theorem of this paper. As mentioned above, we would like to generalize the results given in the Section~5 of \cite{roy:samorodnitsky:2008} to the continuous parameter case. More specifically, we would like to extend Theorem~5.4 therein. In order to do that, we follow the notations in \cite{roy:samorodnitsky:2008} and introduce the notion of group theoretic dimension for the continuous parameter stationary S$\alpha$S random field $\mathbf{X}$.

For all $i \in \{0,1,2,\ldots\}$, we define
\begin{eqnarray*}
A_i&:=&\{\phi_u: u\in\Gamma_i\},
\end{eqnarray*}
where $\Gamma_i:=2^{-i}\bbz^d$ as defined in Section~\ref{sec:prelim}. By first isomorphism theorem of groups (see \cite{lang:2002}), it follows that
$
A_i \simeq \Gamma_i / K_i,
$
where
\begin{eqnarray*}
K_i&:=&\{u\in\Gamma_i:\phi_u=\one_S\}\,
\end{eqnarray*}
($\one_S$ denotes the identity map on $S$) is the kernel of the group homomorphism
$$
\Phi_i: \Gamma_i \to A_i
$$
defined by $\Phi_i(u)=\phi_u$, $u \in \Gamma_i$. In particular, as in \cite{roy:samorodnitsky:2008}, $A_i$ is a finitely generated Abelian group and hence by appealing to the structure theorem of such groups (see Theorem 8.5 in Chapter 1 of \cite{lang:2002}), we get $\bar F_i,\bar N_i\subset A_i$ such that $\bar F_i$ is a free Abelian group of rank $p_i \leq d$, $\bar N_i$ is a finite group,
$$
\bar F_i\cap\bar N_i=\{0\}\,,
$$
and
$$
A_i=\bar F_i+\bar N_i\,.
$$
Since $\bar F_i$ is free, there exists an injective group homomorphism $\Psi_i:\bar F_i\longrightarrow\Gamma_i$ so that the following diagram commutes:
\begin{equation}\label{cd}
\begin{CD}
\Gamma_i @>{\Phi_i}>>  A_i\\
@A{inclusion}AA @AA{inclusion}A\\
F_i @<<\Psi_i< \bar F_i
\end{CD}
\end{equation}
where $F_i:=\Psi_i(\bar F_i)$ is a free subgroup of $\Gamma_i$ of rank $p_i \leq d$.

Following \cite{roy:samorodnitsky:2008}, $p_i$ can be regarded as the effective dimension of the subfield $\{X_t\}_{t \in \Gamma_i}$. Recall that $p_i$ determines the rate of growth of the subfield. The first result of this paper shows that the $p_i$'s are all equal.

\begin{prop}\label{p2}
For all $i\ge0$, $p_i=p_{i+1}$.
\end{prop}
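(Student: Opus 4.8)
The plan is to reduce the statement to a one-line computation of torsion-free ranks. Recall that $p_i$ is the rank of the free part $\bar F_i$ in the decomposition $A_i=\bar F_i+\bar N_i$ with $\bar N_i$ finite; since the rank of the free part is an invariant of the finitely generated Abelian group $A_i$ (it equals $\dim_{\mathbb{Q}}(A_i\otimes_{\bbz}\mathbb{Q})$, and $-\otimes_{\bbz}\mathbb{Q}$ kills torsion), we may write $p_i=\operatorname{rank}(A_i)$ for the torsion-free rank. Thus it suffices to prove $\operatorname{rank}(A_i)=\operatorname{rank}(A_{i+1})$.

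First I would record the elementary but decisive relation between the two groups. Since $\Gamma_i=2^{-i}\bbz^d$ and $\Gamma_{i+1}=2^{-(i+1)}\bbz^d$, we have $\Gamma_i\subseteq\Gamma_{i+1}$ and, more precisely, $2\Gamma_{i+1}=\Gamma_i$. Because $\Phi_{i+1}$ restricted to $\Gamma_i$ coincides with $\Phi_i$ (both send $u\mapsto\phi_u$), and because $A_{i+1}$ is Abelian so that $a\mapsto 2a$ is an endomorphism of $A_{i+1}$ with image $2A_{i+1}$, applying $\Phi_{i+1}$ to the identity $2\Gamma_{i+1}=\Gamma_i$ gives
\[
2A_{i+1}=\Phi_{i+1}(2\Gamma_{i+1})=\Phi_{i+1}(\Gamma_i)=\Phi_i(\Gamma_i)=A_i.
\]
(Equivalently, one can argue at the level of kernels: $K_i=K_{i+1}\cap\Gamma_i$ so $K_i\subseteq K_{i+1}$, while $2K_{i+1}\subseteq K_i$ because $\phi_u=\one_S$ forces $\phi_{2u}=\phi_u\circ\phi_u=\one_S$ and $2u\in\Gamma_i$; using $p_i=d-\operatorname{rank}(K_i)$ from the exact sequence $0\to K_i\to\Gamma_i\to A_i\to0$ then yields the same conclusion.)

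Finally I would invoke the standard fact that doubling preserves rank: the endomorphism $a\mapsto 2a$ of the finitely generated Abelian group $A_{i+1}$ has kernel equal to its $2$-torsion $A_{i+1}[2]$, which is finite and hence of rank $0$, so $\operatorname{rank}(2A_{i+1})=\operatorname{rank}(A_{i+1})$. Combined with $A_i=2A_{i+1}$ this gives $p_i=\operatorname{rank}(A_i)=\operatorname{rank}(A_{i+1})=p_{i+1}$, which is exactly the claim.

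I expect the only genuinely delicate point to be bookkeeping at the level of the group of nonsingular transformations rather than the index set $\Gamma_i$: one must read ``$\phi_u=\one_S$'' as equality in $A_i$ (that is, $\mu$-a.e.) and use the action relation $\phi_{t_1+t_2}=\phi_{t_1}\circ\phi_{t_2}$, valid $\mu$-a.e. on the invariant set $S'$, so that $\phi_{2u}=\phi_u\circ\phi_u$ holds as an identity in $A_{i+1}$. Once this identification is made, the relation $A_i=2A_{i+1}$ and the rank computation are entirely routine, so I would not anticipate any serious obstruction beyond making the group-theoretic identifications precise.
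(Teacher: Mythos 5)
Your proof is correct, and it takes a genuinely cleaner route than the paper's. The paper first establishes the rank identity $p_i+q_i=d$ with $q_i:=\operatorname{rank}(K_i)$ --- the nontrivial half being the inclusion $r\Gamma_i\subset F_i+K_i$ with $r=|\bar N_i|$, proved by chasing the commutative diagram \eqref{cd} --- and then reduces the proposition to $q_i=q_{i+1}$, which it deduces from the sandwich $2K_{i+1}\subset K_i\subset K_{i+1}$. You instead work directly with the quotient groups: you identify $p_i$ with the torsion-free rank of $A_i$ (an isomorphism invariant, hence independent of the chosen decomposition $\bar F_i+\bar N_i$), observe that $A_i=\Phi_{i+1}(2\Gamma_{i+1})=2A_{i+1}$ because $2\Gamma_{i+1}=\Gamma_i$ and $\Phi_{i+1}$ restricts to $\Phi_i$ there, and note that multiplication by $2$ on a finitely generated Abelian group has finite kernel and therefore preserves rank. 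This bypasses both the computation of $\operatorname{rank}(K_i)$ and the inclusion $r\Gamma_i\subset F_i+K_i$ entirely; your parenthetical kernel-based variant, with $p_i=d-\operatorname{rank}(K_i)$ obtained by tensoring $0\to K_i\to\Gamma_i\to A_i\to0$ with $\mathbb{Q}$, is essentially the paper's argument with its longest step replaced by rank additivity. Both proofs ultimately rest on the same fact --- doubling has finite kernel, hence preserves rank --- applied to $A_{i+1}$ in your main line and to $K_{i+1}$ in the paper's. Your closing caveat about reading $\phi_u=\one_S$ as $\mu$-a.e.\ equality is well taken and applies equally to the paper's own definition of $K_i$.
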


\begin{proof}Fix $i\ge0$. Since $K_i$ is a subgroup of $\mathbb{Z}^d$, it is a free Abelian group. Let $q_i:=rank(K_i)$. We start by showing that
\begin{equation}\label{p2.eq1}
p_i+q_i=d\,.
\end{equation}
Note that $F_i$ and $K_i$ are subgroups of $\Gamma_i$ of rank $p_i$ and $q_i$ respectively, with $F_i\cap K_i=\{0\}$. Thus, it is immediate that $p_i+q_i\le d$. To show the other inequality, it suffices to prove that
\begin{equation}\label{p2.eq2}
r\Gamma_i\subset F_i+K_i\,,
\end{equation}
where $r:=|\bar N_i|$. To that end, fix $u\in\Gamma_i$ and notice that $\Phi_i(u)$ being an element of $\Gamma_i/K_i$, can be written as
$$
\Phi_i(u)=\bar v+y
$$
where $\bar v\in\bar F_i$ and $y\in\bar N_i$. Since $|\bar N_i|=r$, it follows that $ry=0$, and hence
$$
\Phi_i(ru)=r\bar v\,.
$$
Define $v:=\Psi_i(r\bar v)$. Since the diagram in \eqref{cd} commutes, it follows that
$$
\Phi_i(v)=r\bar v=\Phi_i(ru)\,.
$$
Thus, $ru-v\in$ Ker$(\Phi_i)=K_i$, which shows \eqref{p2.eq2} and consequently proves \eqref{p2.eq1}.
In view of \eqref{p2.eq1}, it suffices to show that $q_i=q_{i+1}$ and that follows trivially because
$2K_{i+1}\subset K_i\subset K_{i+1}$.
\end{proof}

Based on the preceding result, we denote
$$
p:=p_0=p_1=\ldots \, \leq d
$$
and define it to be the group theoretic dimension of the random field $\mathbf{X}$. We shall assume throughout the paper that {\bf$p\ge1$} (see Remark~5.5 of \cite{roy:samorodnitsky:2008}). As in the discrete parameter case, $p$ has information on the rate of growth of the partial maxima \eqref{def:M_t} as described below in Theorem~\ref{extreme.main}, which extends Theorem~5.4 of \cite{roy:samorodnitsky:2008} to the continuous parameter case and is the main result of this paper.

\begin{theorem}\label{extreme.main}
(i) If the group action $\{\phi_t:t\in F_0\}$ is not conservative, then
\begin{equation}\label{main.eq1}
t^{-p/\alpha}M_t\Longrightarrow KZ_\alpha
\end{equation}
as $t\longrightarrow\infty$, where $K\in(0,\infty)$ is a constant and $Z_\alpha$ is the standard Fr\'echet-type extreme value random variable with c.d.f.
$$
P(Z_\alpha\le z)=\exp\left(-z^{-\alpha}\right)
$$
for $z>0$.\\
(ii) If the group action $\{\phi_t:t\in F_0\}$ is conservative, then
\begin{equation}\label{main.eq2}
t^{-p/\alpha}M_t\prob0
\end{equation}
as $t\longrightarrow\infty$.
\end{theorem}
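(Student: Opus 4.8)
The plan is to reduce the continuous-parameter statement to the discrete-parameter result already available, Theorem~5.4 of \cite{roy:samorodnitsky:2008}, applied to the dyadic skeletons $\{X_s:s\in\Gamma_i\}$, and then to control the passage to the full dense index set $\Gamma=\cup_i\Gamma_i$. The engine is the reduction, developed in \cite{samorodnitsky:2004b} and \cite{roy:2010b}, of the extremal behaviour of $M_t$ to the deterministic normalization
\[
b_t:=\left(\int_S\sup_{s\in[-t\one,t\one]\cap\Gamma}|f_s(x)|^\alpha\,\mu(dx)\right)^{1/\alpha},
\]
which is finite since $\mathbf X$ is locally bounded: the single-large-jump mechanism underlying those works yields the limit $b_t^{-1}M_t\Longrightarrow c_\alpha Z_\alpha$, where $c_\alpha\in(0,\infty)$ depends only on $\alpha$. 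Thus the whole theorem reduces to the asymptotics of $b_t$: I claim that $b_t\sim K't^{p/\alpha}$ when $\{\phi_t:t\in F_0\}$ is not conservative and $b_t=o(t^{p/\alpha})$ when it is. In the first case $t^{-p/\alpha}M_t=(t^{-p/\alpha}b_t)(b_t^{-1}M_t)\Longrightarrow K'c_\alpha Z_\alpha$, which is \eqref{main.eq1} with $K=K'c_\alpha$; in the second case $b_t^{-1}M_t=O_P(1)$ forces $t^{-p/\alpha}M_t\prob0$, which is \eqref{main.eq2}.

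To analyse $b_t$ I would use that $\Gamma=\cup_i\Gamma_i$ is increasing, so that by monotone convergence $b_t^\alpha=\sup_i b_{t,i}^\alpha$, where
\[
b_{t,i}:=\left(\int_S\max_{s\in[-t\one,t\one]\cap\Gamma_i}|f_s(x)|^\alpha\,\mu(dx)\right)^{1/\alpha}
\]
is the corresponding normalization for the $i$-th skeleton. After rescaling the index by $2^i$, this skeleton is a stationary $\bbz^d$-indexed S$\alpha$S field whose group of nonsingular transformations is exactly $A_i$, with free part $\bar F_i$ of rank $p_i=p$ by Proposition~\ref{p2}. Hence the discrete theory behind Theorem~5.4 of \cite{roy:samorodnitsky:2008} gives, for each fixed $i$, that $b_{t,i}\sim K_it^{p/\alpha}$ when $\{\phi_u:u\in F_i\}$ is not conservative and $b_{t,i}=o(t^{p/\alpha})$ when it is, for some $K_i\in(0,\infty)$.

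The next input, which I would isolate as an ergodic lemma, identifies the type of every skeleton with the hypothesis: $\{\phi_u:u\in F_0\}$ is conservative if and only if $\{\phi_u:u\in F_i\}$ is conservative for all $i\ge0$. This follows by climbing a tower of finite-index inclusions. The commuting diagram~\eqref{cd} shows that the $F_i$-action coincides with the $\bar F_i$-action, which sits in $A_i$ with index $|\bar N_i|<\infty$; moreover $A_0\subseteq A_i$ with index at most $[\Gamma_i:\Gamma_0]=2^{id}$; and conservativity is preserved under passage to and from finite-index subgroups, the dyadic step $A_0\subseteq A_i$ being precisely Proposition~2.1 of \cite{roy:2010b} (based on \cite{kolodynski:rosinski:2003}). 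As a by-product, in the non-conservative case of part~(i) the elementary bound $M_t\ge\max_{s\in[-t\one,t\one]\cap\Gamma_0}|X_s|$ and the $i=0$ instance of Theorem~5.4 already show that the constant $K$ is strictly positive.

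The remaining input, which I expect to be the main obstacle, is the uniformity in $i$ needed to pass from the fixed-$i$ asymptotics of $b_{t,i}$ to those of $b_t=\sup_i b_{t,i}$. For each fixed $i$ we control $b_{t,i}$ as $t\to\infty$, and for each fixed $t$ we have $b_{t,i}\uparrow b_t$ as $i\to\infty$, so determining the rate of $b_t$ amounts to interchanging these two limits. The delicate point is that the $i$-th skeleton resolves the box $[-t\one,t\one]$ on the scale $2^{-i}$, so its effective window carries a factor $(2^it)^{p/\alpha}$ and the coefficient $K_i$ already contains a factor $2^{ip/\alpha}$ growing with the refinement; the substance of the estimate is that the intrinsic per-skeleton constant vanishes at exactly the compensating rate, so that $K_i$ increases to a finite limit $K'$ and $\sup_i b_{t,i}=O(t^{p/\alpha})$ in general, while $\sup_i b_{t,i}=o(t^{p/\alpha})$ when $\{\phi_t:t\in F_0\}$ is conservative. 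Equivalently, refining the grid from $\Gamma_i$ to the dense set $\Gamma$ must neither raise the exponent $p$ nor send the constant to infinity; this is a uniform control of the dissipative mass of the action across all dyadic refinements, for which the conservativity-transfer of \cite{kolodynski:rosinski:2003} is the essential tool. Carrying out this uniform estimate is the technical heart of the argument, after which both parts of the theorem follow as explained at the outset.
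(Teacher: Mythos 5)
Your overall strategy is the paper's: reduce everything to the asymptotics of $b_t$, obtain those asymptotics from the discrete-parameter results of \cite{roy:samorodnitsky:2008} applied to the dyadic skeletons, and transfer conservativity between the free parts $F_i$ (your finite-index tower argument for the latter is a legitimate variant of the paper's Lemmas~\ref{l1} and~\ref{p0}). But the proposal has a genuine gap exactly where you flag ``the technical heart'': you never supply the uniform-in-refinement estimate that justifies interchanging $t\to\infty$ with $i\to\infty$ in $b_t^\alpha=\sup_i b_{t,i}^\alpha$, and your heuristic for why it should hold (a per-skeleton constant ``vanishing at exactly the compensating rate'') does not point at a workable mechanism. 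The paper's mechanism is the covering Lemma~\ref{l2} (from Lemma~5.2 of \cite{roy:samorodnitsky:2008}): $[-n\one,n\one]\cap\Gamma$ is covered by translates $\tilde H(k_1,\ldots,k_p,l_1,\ldots,l_q)$ of a single \emph{bounded} reference cell, the $K_0$-directions collapse because $\phi$ acts trivially there, so only $(2n)^p$ cells contribute; by stationarity (Corollary~4.4.6 of \cite{samorodnitsky:taqqu:1994}) the discretization error
$\int_S\bigl[\sup_{t\in \tilde H(0,\ldots,0)}|f_t|^\alpha-\max_{t\in\Gamma_m\cap \tilde H(0,\ldots,0)}|f_t|^\alpha\bigr]d\mu$
is the same for every cell and is made $<\vep$ for large $m$ by monotone convergence on that one cell, giving the key bound $\tilde a_n-\tilde a_n^{(m)}\le(2n)^p\vep$ (equation~\eqref{p3.eq4}). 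Without some such device the interchange of limits is unjustified, and this is precisely the step that separates the continuous-parameter theorem from its discrete antecedent.

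A second, smaller gap: in part~(i) you assert $b_t^{-1}M_t\Longrightarrow c_\alpha Z_\alpha$ as an automatic output of the ``single large jump'' reduction. Theorem~4.1 of \cite{roy:2010b} delivers the Fr\'echet limit only after one verifies the asymptotic no-ties condition \eqref{main.eq4} for two independent points drawn from $\eta_t$; the paper checks it with a union bound over the $(2\lceil t\rceil)^p$ cells $\overline H(k_1,\ldots,k_p)$, which converges to $0$ precisely because $b_t\asymp t^{p/\alpha}$. This verification again uses the covering structure, so it cannot be waved through independently of the missing estimate above. Your part~(ii) argument ($b_t=o(t^{p/\alpha})$ plus tightness of $b_t^{-1}M_t$) is fine as stated, conditional on the $b_t$ asymptotics.
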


\begin{remark}{\rm The constant $K$ in \eqref{main.eq1} can be written as
$$
K:=C_\alpha^{1/\alpha}K_X\,,
$$
where $C_\alpha$ is the stable tail constant given by
\begin{equation*}
C_\alpha = {\left(\int_0^\infty x^{-\alpha} \sin{x}\,dx
\right)}^{-1}
 =\left\{
  \begin{array}{ll}
  \frac{1-\alpha}{\Gamma(2-\alpha) \cos{(\pi
\alpha/2)}}&\mbox{\textit{\small{if }}}\alpha \neq 1,\\
  \frac{2}{\pi}
&\mbox{\textit{\small{if }}}\alpha = 1,
  \end{array}
 \right.
\end{equation*}
and $K_X$ is the constant that equals the limit in \eqref{limit:p1:noncons} below.
}
\end{remark}

\begin{remark} {\rm As described in \cite{samorodnitsky:2004a}, the maxima process grows in a smaller rate when $\{X_t\}$ has stronger dependence due to the conservativity of the underlying action. Therefore, stronger conservativity of the action yields a smaller value of $p$ (because of a bigger $K_0$) and hence smaller rate of growth of $M_t$ and this is manifested in Theorem~\ref{extreme.main}.}
\end{remark}

We shall prove Theorem~\ref{extreme.main} in the next section. We first illustrate it with the following continuous parameter analogue of Example~6.2 in \cite{roy:samorodnitsky:2008}.

\begin{example} \label{example}
{\rm Suppose $$U:=\{\zeta \in \mathbb{C}: |\zeta|=1\}$$ is the unit circle. We take $d=3$, and define the $\mathbb{R}^3$-action
$\{\phi_{(x,y,z)}\}$ on $S=\mathbb{R}\times U$ as
\[
\phi_{(x,y,z)}(s,\zeta)=(s+x-y, \zeta e^{i 2 \pi z})\,.
\]
Clearly, this action preserves the measure $\mu$ on $S$ defined as the product
of the Lebesgue measure on $\mathbb{R}$ and the Haar probability measure on
$U$. In parallel to Example~6.2 in \cite{roy:samorodnitsky:2008}, we can take any $f \in L^\alpha(S,\mu)$ and define a stationary $S\alpha S$
random field
$\{X_{(x,y,z)}\}$ by
\[
X_{(x,y,z)}=\int_{\mathbb{R}\times U}
f\big(\phi_{(x,y,z)}(s,\zeta)\big)\, d\tilde{M}(s,\zeta)\,,
\]
where $\tilde{M}$ is a $S\alpha S$ random measure on $\mathbb{R}\times U$ with control measure $\mu$.}

{\rm For all $i=0,1,2,\ldots$,
$
K_i=\{(x,y,z)\in \Gamma_i: x=y, z \in \mathbb{Z}\}
$
and therefore following the calculations in Example~6.2 of \cite{roy:samorodnitsky:2008}, we get
$
A_i \simeq 2^{-i}\mathbb{Z} \times (\mathbb{Z}/{2^i}\mathbb{Z})
$
and
$
F_i=2^{-i}\mathbb{Z} \times \{0\} \times \{0\}.
$
In particular, $p=1$ and $\{\phi_t\}_{t\in F_0}$ is not conservative since $W:=(0,1) \times U$ is a wandering set of positive $\mu$-measure. Therefore, Theorem~\ref{extreme.main} yields that
${t^{-1/\alpha}}M_t$
converges to a Fr\'echet type extreme value random variable.}
\end{example}

\begin{remark} \label{remark:criticism}
{\rm Theorem~\ref{extreme.main} above is expected to give better results than Theorem~4.1 of \cite{roy:2010b} when the underlying action is conservative. For example, in Example~\ref{example}, Theorem~4.1 of \cite{roy:2010b} would just yield $M_t=o_p(t^{3/\alpha})$ as opposed to $M_t=O_p(t^{1/\alpha})$. However, this is not always the case as shown in the following example.}
\end{remark}

\begin{example} \label{example:nadkarni}
{\rm Consider the continuous parameter analogue of Example~6.3 (based on an action suggested by M.~G.~Nadkarni) in \cite{roy:samorodnitsky:2008}: $S=\bbr$ endowed with the Lebesgue measure, $d=2$, $f:=I_{[0,1]}$, and for all $(u,v)\in\bbr^2$,
$$
\phi_{u,v}(s):=s+u-v\sqrt2,\,s\in\bbr
$$
and $X_{(u,v)}$ is defined by \eqref{intro.eq0} and\eqref{intro.eq1}. It can be shown that $M_t=O_p(t^{1/\alpha})$  in this example (see Remark \ref{remark:last} below) although both Theorem~\ref{extreme.main} above and Theorem~4.1 of \cite{roy:2010b} would give $M_t=o_p(t^{2/\alpha})$.}
\end{example}

\section{Proof of Theorem~\ref{extreme.main}} \label{sec:proof}

The rate of growth of the maxima process is determined by that of the deterministic function $b(T)$ defined as
$$
b(T):=\left\{\int_S\sup_{t\in[-T\one,T\one]\cap\Gamma}|f_t(s)|^\alpha\mu(ds)\right\}^{1/\alpha}.
$$
See \cite{samorodnitsky:2004a, samorodnitsky:2004b}, \cite{roy:samorodnitsky:2008}. We start by studying the growth rate of $b(T)$ in both the conservative and the non-conservative cases using the discrete parameter approximation of the field and appealing to the results available in Section~5 of \cite{roy:samorodnitsky:2008}.

\begin{prop}\label{p1} (i) If the action $\{\phi_t: t\in F_0\}$ is conservative, then,
$$
\lim_{T\to\infty}T^{-p/\alpha}b(T)=0\,.
$$
(ii) On the other hand, if $\{\phi_t:t\in F_0\}$ is not conservative, then
\begin{equation}
\lim_{T\to\infty}T^{-p/\alpha}b(T) \label{limit:p1:noncons}
\end{equation}
exists, and is finite and positive.
\end{prop}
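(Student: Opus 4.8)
The plan is to reduce the continuous-parameter growth-rate statement to the discrete-parameter results already available in Section~5 of \cite{roy:samorodnitsky:2008} by exploiting the dyadic skeleton structure of $\Gamma=\cup_i\Gamma_i$. The central observation is that the supremum defining $b(T)$ is taken over $t\in[-T\one,T\one]\cap\Gamma$, and $\Gamma$ is an increasing union of the lattices $\Gamma_i=2^{-i}\bbz^d$. For each fixed $i$, define the discrete analogue
\[
b_i(T):=\left\{\int_S\sup_{t\in[-T\one,T\one]\cap\Gamma_i}|f_t(s)|^\alpha\mu(ds)\right\}^{1/\alpha}.
\]
Since $\Gamma_i\subset\Gamma_{i+1}\subset\cdots$ and $\Gamma=\cup_i\Gamma_i$, the integrands increase pointwise in $i$ and converge, by continuity in probability of $\mathbf{X}$ (hence a suitable continuity of $s\mapsto f_t(s)$ along the dense set $\Gamma$), to the integrand defining $b(T)$. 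By monotone convergence, $b_i(T)\uparrow b(T)$ as $i\to\infty$, for each fixed $T$.

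Next I would invoke the discrete-parameter result. For each fixed $i$, the subfield $\{X_t\}_{t\in\Gamma_i}$ is a stationary S$\alpha$S random field indexed by the lattice $\Gamma_i\cong\bbz^d$, whose underlying group action is the restriction $\{\phi_t\}_{t\in\Gamma_i}=A_i$, with effective dimension $p_i=p$ by Proposition~\ref{p2}. Theorem~5.4 (and the accompanying growth-rate estimate for the deterministic function, cf.\ the counting argument based on \cite{deloera:2005}) of \cite{roy:samorodnitsky:2008} applies to $b_i$: after rescaling $t$ by the lattice spacing $2^{-i}$, it gives that $T^{-p/\alpha}b_i(T)$ tends to $0$ in the conservative case and to a finite positive limit in the non-conservative case, where conservativity is decided by the action on the free part $F_i$. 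A key point, which I must verify, is that conservativity of $\{\phi_t\}_{t\in F_0}$ is equivalent to conservativity of $\{\phi_t\}_{t\in F_i}$ for every $i$: this should follow from the dyadic refinement relation $2K_{i+1}\subset K_i\subset K_{i+1}$ used in Proposition~\ref{p2} together with the Kolodyński--Rosiński-type stability of conservativity under passage to finer lattices (Proposition~2.1 of \cite{roy:2010b}), so that the dichotomy in (i) versus (ii) is coherent across all skeletons.

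The main obstacle will be interchanging the two limits $i\to\infty$ and $T\to\infty$, that is, passing from $\lim_T T^{-p/\alpha}b_i(T)$ (known for each $i$) to $\lim_T T^{-p/\alpha}b(T)$. In the conservative case (i) this is comparatively soft: since $b_i(T)\le b(T)$ does not immediately help, I would instead bound $b(T)$ from above by controlling how much the supremum over the finer lattice $\Gamma$ can exceed that over $\Gamma_i$, using local boundedness of $\mathbf{X}$ and the uniform (in $i$) nature of the conservative decay; one expects a uniform bound $T^{-p/\alpha}b(T)\le\epsilon$ for large $T$ by first choosing $i$ large. In the non-conservative case (ii), the delicate step is showing the limit exists and matches a genuine positive constant: here I would establish that the family $\{T^{-p/\alpha}b_i(T)\}$ is asymptotically Cauchy in $i$ uniformly in large $T$, exploiting that the limiting constants from the discrete theory stabilize as $i\to\infty$ (because the wandering-set structure on $F_i$ refines consistently under dyadic subdivision). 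The quantitative handle for this uniformity is precisely the explicit integral-geometric constant produced by the \cite{deloera:2005} lattice-point count, which scales predictably with the lattice spacing; pinning down that the positive limit is independent of the skeleton, and hence well-defined as the stated limit \eqref{limit:p1:noncons}, is the crux of the argument.
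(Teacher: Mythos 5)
Your overall strategy --- approximate $b(T)$ by its dyadic skeletons $b_i(T)$, apply the discrete-parameter results of \cite{roy:samorodnitsky:2008} to each skeleton (using that $p_i=p$ and that conservativity of $\{\phi_t\}_{t\in F_i}$ is coherent across $i$, which is exactly Lemma~\ref{p0}), and then interchange the limits in $i$ and $T$ --- is the same as the paper's, and you correctly isolate the interchange of limits as the crux. The gap is that you do not actually supply the mechanism that makes the interchange work, and the mechanisms you gesture at would not. Pointwise monotone convergence $b_i(T)\uparrow b(T)$ for each fixed $T$ says nothing about $\lim_T T^{-p/\alpha}b(T)$ unless you can bound $b(T)^\alpha-b_i(T)^\alpha$ by $o(T^{p})$ \emph{uniformly in $T$} as $i\to\infty$; local boundedness of $\mathbf{X}$ does not produce such a bound, and the lattice-point count of \cite{deloera:2005} lives entirely inside the discrete-parameter proof of \cite{roy:samorodnitsky:2008} --- it has no bearing on how much the supremum over $\Gamma$ can exceed the maximum over $\Gamma_m$.

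The missing idea is the covering device of Lemma~\ref{l2}: the cube $[-n\one,n\one]\cap\Gamma$ is covered by on the order of $(2n)^{p}$ translates, along directions $u_1,\dots,u_p\in F_0$ (the $K_0$-directions being essentially absorbed because $\phi_v=\one_S$ for $v\in K_0$), of a single \emph{fixed bounded} fundamental cell $E\subset\Gamma$. On that one cell, monotone convergence produces a finite subset $E_m$ supported on the lattice $\Gamma_m$ with $\int_S\bigl[\sup_{t\in E}|f_t|^\alpha-\max_{t\in E_m}|f_t|^\alpha\bigr]\,d\mu<\vep$, and --- this is the decisive step --- by stationarity (Corollary~4.4.6 of \cite{samorodnitsky:taqqu:1994}) the \emph{same} $\vep$ bounds the discretization error on every translate of the cell. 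Summing over the $(2n)^{p}$ cells gives a bound of the form $(2n)^{p}\vep$ on the difference between the continuous and skeleton quantities, valid for all $n$ simultaneously; this is precisely the uniform control you need, after which the skeleton quantity is dispatched by Proposition~5.1 of \cite{roy:samorodnitsky:2008}. In part (ii) one additionally needs the two-sided sandwich $b(n)^\alpha\le\tilde a_n\le b(n+K)^\alpha$ (the cover overshoots the cube by a bounded amount $K$) and the monotonicity $A_m\le A_{m+1}$ to identify the limit as $A=\sup_m A_m<\infty$; your claim that the family is ``asymptotically Cauchy in $i$ uniformly in large $T$'' is exactly the statement that has to be proved, and without the translation-invariance argument above you have no route to it.
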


In order to prove Proposition~\ref{p1}, we need the following lemmas, whose proofs are presented in the Appendix.

\begin{lemma}\label{l1} If for any finitely generated Abelian group $G$, the action $\{\phi_u:u\in G\}$ is conservative, then so is the action $\{\phi_{ru}:u\in G\}$ for all integers $r\ge1$.
\end{lemma}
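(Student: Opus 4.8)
The plan is to prove the contrapositive: if the action $\{\phi_{ru}:u\in G\}$ admits a wandering set of positive measure, then so does $\{\phi_u:u\in G\}$, contradicting its conservativity. Equivalently, I would fix $r\ge1$ and show directly that conservativity of $\{\phi_u:u\in G\}$ forces conservativity of the subgroup action generated by $\{\phi_{ru}:u\in G\}$. The first observation is that $rG:=\{ru:u\in G\}$ is itself a subgroup of $G$, so $\{\phi_{ru}:u\in G\}$ is genuinely a $G$-action (reindexed through the endomorphism $u\mapsto ru$), and the group it generates inside the transformations of $S$ is exactly $\{\phi_v:v\in rG\}$. So the statement reduces to: conservativity of the action of $G$ implies conservativity of the action of the subgroup $rG$.

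The key structural fact is that $rG$ has finite index in $G$: since $G$ is a finitely generated Abelian group, $G/rG$ is a finite group (each $\bbz$ summand contributes $\bbz/r\bbz$ and each finite cyclic summand $\bbz/m\bbz$ contributes a quotient dividing it, so $|G/rG|<\infty$). Thus I may pick finitely many coset representatives $u_1,\ldots,u_k\in G$ with $G=\bigcup_{j=1}^k (u_j+rG)$. The heart of the argument is then the general principle that \emph{conservativity passes to finite-index subgroups}. Concretely, suppose $W$ is a wandering set of positive measure for $\{\phi_v:v\in rG\}$, meaning the sets $\{\phi_v(W):v\in rG\}$ are pairwise disjoint. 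Consider the enlarged set $W':=\bigcup_{j=1}^k \phi_{u_j}(W)$. I would show that from $W$ one can extract (or directly build) a wandering set of positive measure for the full action of $G$, which contradicts the hypothesis.

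The cleanest route to this contradiction is as follows. If every $\phi_{u_j}(W)$, $j=1,\ldots,k$, were disjoint from all the $G$-translates in a suitable sense, then $W$ itself would already be $G$-wandering; the obstruction is that translates of $W$ by elements of distinct cosets might overlap. To handle this I would use a standard partitioning argument: define, for each $g\in G$, whether $\phi_g(W)\cap W$ has positive measure, and use the finiteness of $G/rG$ together with the wandering property along $rG$ to pass to a positive-measure subset $W_0\subseteq W$ that is wandering for all of $G$. Precisely, I expect to iterate over the finitely many cosets, at each stage removing from $W$ the (measurable) portion whose $G$-orbit collides with translates by the current coset, and to argue that positivity of measure survives because only finitely many cosets are involved while the $rG$-translates are already disjoint. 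The main obstacle is exactly this last combinatorial bookkeeping: ensuring that, after excising the finitely many ``bad'' overlaps across cosets, a positive-measure wandering set remains, since naive excision could in principle remove everything. I would control this by working inside a single $rG$-wandering ``fundamental'' copy and exploiting that the nonsingular maps $\phi_{u_j}$ preserve nullity and positivity of measure, so the finitely many coset corrections cannot destroy all of $W$ unless $W$ were null to begin with.
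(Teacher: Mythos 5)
Your reduction to the finite-index subgroup $rG$ is the right starting point, and the finiteness $|G/rG|\le r^k$ (with $k$ the number of generators of $G$) is exactly what the paper exploits. The gap is in the step where you extract a $G$-wandering set of positive measure from the $rG$-wandering set $W$. Each coset $u+rG$ contains \emph{infinitely many} group elements, so ``the portion of $W$ whose orbit collides with translates by the current coset,'' namely $B_u:=\{x\in W:\phi_{u+h}(x)\in W\mbox{ for some }h\in rG\}$, is a union of infinitely many overlap sets and can equal all of $W$; excising it at the very first stage can leave nothing, and the safeguard you invoke (``finitely many coset corrections cannot destroy all of $W$ unless $W$ were null'') is illusory. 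A toy illustration: take $S=\bbz$ with counting measure, $\phi_n(x)=x+n$, $r=2$, $W=\{0,1\}$. Then $W$ is $2\bbz$-wandering, yet every point of $W$ is carried into $W$ by some odd translate, so your excision for the nontrivial coset removes all of $W$ --- even though a positive-measure wandering subset (namely $\{1\}$) does exist. So the procedure as described does not deliver the set it is supposed to deliver.

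What is missing is one of two ingredients. The paper's route avoids any construction: since $W$ is $rG$-wandering, for every $x$ at most one $t$ in each coset of $rG$ can satisfy $\phi_t(x)\in W$, whence $\sum_{t\in G}\one_W\circ\phi_t\le|G/rG|\le r^k$ everywhere; on the other hand, conservativity of the $G$-action forces $\sum_{t\in G}\one_W\circ\phi_t=\infty$ a.e.\ on $W$ by the Halmos-type recurrence criterion (Proposition~2.2 of \cite{roy:samorodnitsky:2008}), an immediate contradiction. If you insist on directly producing a $G$-wandering set, the bounded-return count $|\{t\in G:\phi_t(x)\in W\}|\le r^k$ must be combined with a ``last visit'' extraction: fix a translation-invariant total order on $G$ (available here since the relevant $G$ is a subgroup of $\bbr^d$) and set $V:=\{x\in W:\phi_g(x)\notin W\mbox{ for all }g>0\}$; then $V$ is $G$-wandering and $W\subset\bigcup_{g\in G}\phi_{-g}(V)$ modulo null sets, so $\mu(V)>0$. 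Either ingredient closes the gap; as written, your argument does not.
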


\begin{lemma}\label{p0} Suppose that for some integer $I\ge0$, the action $\{\phi_t:t\in F_I\}$ is conservative. Then, for all $i\ge0$, the action $\{\phi_t:t\in F_i\}$ is conservative.
\end{lemma}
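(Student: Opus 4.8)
The plan is to prove the equivalence that $\{\phi_t:t\in F_i\}$ is conservative if and only if $\{\phi_t:t\in F_{i+1}\}$ is conservative, for every $i\ge0$; granting this, conservativity at the given level $I$ propagates to all larger and all smaller levels by induction, yielding the claim for every $i\ge0$. Throughout I would work inside the group $A_{i+1}$ of nonsingular transformations, using that $\Gamma_i\subset\Gamma_{i+1}$ forces $A_i\subset A_{i+1}$ and that $\Phi_{i+1}$ restricted to $\Gamma_i$ coincides with $\Phi_i$. Because $\Phi_i$ is injective on $F_i$ (as $F_i\cap K_i=\{0\}$), the reindexing $t\mapsto\phi_t$ identifies wandering sets for $\{\phi_t:t\in F_i\}$ with wandering sets for the transformation subgroup $\bar F_i=\Phi_i(F_i)\subset A_{i+1}$; hence conservativity of the former is exactly conservativity of the $\bar F_i$-action, and similarly for $\bar F_{i+1}\subset A_{i+1}$. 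This reduces everything to comparing the two rank-$p$ free subgroups $\bar F_i$ and $\bar F_{i+1}$ of $A_{i+1}$.

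The two tools I would use are Lemma~\ref{l1} (scaling a conservative action by any integer $r\ge1$ keeps it conservative) and the elementary monotonicity that, for subgroups $H\le H'$ of transformations, conservativity of the $H$-action implies conservativity of the $H'$-action (a wandering set for $H'$ is automatically wandering for the smaller index set $H$, so a positive-measure wandering set for $H'$ would be one for $H$). The bridge between $\bar F_i$ and $\bar F_{i+1}$ comes from the canonical torsion subgroup $\bar N_{i+1}$: writing $r_{i+1}:=|\bar N_{i+1}|$ and decomposing $A_{i+1}=\bar F_{i+1}\oplus\bar N_{i+1}$, every $x\in A_{i+1}$ satisfies $r_{i+1}x\in\bar F_{i+1}$, so in particular
$$
r_{i+1}\bar F_i\subset\bar F_{i+1}.
$$
Since $\bar F_i$ and $\bar F_{i+1}$ are both free of rank $p$ (Proposition~\ref{p2}), the subgroup $r_{i+1}\bar F_i$ has full rank $p$ in $\bar F_{i+1}$, hence finite index $m$, and Lagrange's theorem then gives $m\bar F_{i+1}\subset r_{i+1}\bar F_i$, producing the chain $m\bar F_{i+1}\subset r_{i+1}\bar F_i\subset\bar F_{i+1}$ together with $r_{i+1}\bar F_i\subset\bar F_i$.

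With these inclusions the two implications are short. If $\bar F_i$ is conservative, then Lemma~\ref{l1} makes $r_{i+1}\bar F_i$ conservative, and since $r_{i+1}\bar F_i\subset\bar F_{i+1}$, monotonicity upgrades this to conservativity of $\bar F_{i+1}$. Conversely, if $\bar F_{i+1}$ is conservative, then Lemma~\ref{l1} (with $r=m$) makes $m\bar F_{i+1}$ conservative; the inclusions $m\bar F_{i+1}\subset r_{i+1}\bar F_i\subset\bar F_i$ and monotonicity then give conservativity of $\bar F_i$. I expect the only delicate points to be bookkeeping rather than analysis: confirming that conservativity of the index action $\{\phi_t:t\in F_i\}$ really coincides with that of the transformation group $\bar F_i$ (which needs the injectivity of $\Phi_i$ on $F_i$), and establishing the inclusion $r_{i+1}\bar F_i\subset\bar F_{i+1}$ through the non-canonical complement $\bar F_{i+1}$ via the canonical torsion part $\bar N_{i+1}$. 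All the genuine ergodic-theoretic content is already packaged in Lemma~\ref{l1}, so no further measure-theoretic work should be required.
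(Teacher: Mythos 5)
Your argument is correct, and it splits naturally into the same two halves as the paper's proof, but it handles the second half quite differently. For the upward direction (level $i$ to level $i+1$) you and the paper do essentially the same thing: apply Lemma~\ref{l1} with $r=|\bar N_{i+1}|$ and use that $r\bar F_i\subset\bar F_{i+1}$ (the paper unpacks your ``monotonicity of conservativity under passing to a larger transformation group'' by explicitly producing, for each positive-measure $A$, elements $s,t\in F_{i+1}$ with $\phi_s=\phi_{rs'}$, $\phi_t=\phi_{rt'}$ via $\Psi_{i+1}\circ\Phi_{i+1}$). For the downward direction, however, the paper abandons group theory entirely: it invokes the asymptotic characterization (5.1)--(5.2) of \cite{roy:samorodnitsky:2008}, noting that conservativity at level $I$ forces $\int_S\sup_{t\in[-n\one,n\one]\cap\Gamma_I}|f_t|^\alpha\,d\mu=o(n^p)$, that the corresponding supremum over the smaller grid $\Gamma_i\subset\Gamma_I$ is no larger, and that a non-conservative action at level $i$ would contradict (5.2) (after a small detour through the Hopf decomposition to extend (5.2) from dissipative to general non-conservative actions). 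Your replacement --- observing that $r_{i+1}\bar F_i$ is a full-rank, hence finite-index $m$, subgroup of $\bar F_{i+1}$, so that $m\bar F_{i+1}\subset r_{i+1}\bar F_i\subset\bar F_i$, and then applying Lemma~\ref{l1} with $r=m$ plus monotonicity --- is purely algebraic, makes no use of the integral representation or of the results of \cite{roy:samorodnitsky:2008} beyond Lemma~\ref{l1}, and yields the cleaner statement that conservativity is equivalent at all levels. The price is the extra bookkeeping you flag (injectivity of $\Phi_i$ on $F_i$, and the fact that both $\bar F_i$ and $\bar F_{i+1}$ have rank $p$, which is supplied by Proposition~\ref{p2} and is proved independently, so there is no circularity); the paper's route avoids the finite-index observation but is less self-contained. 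Both are valid; yours is arguably the more robust argument since it applies to any nonsingular action, not just those arising from a Rosi\'nski representation.
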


\begin{lemma}\label{l2} There exist a positive integer $M$, $u_1,\ldots,u_p\in F_0$ and\\
$v_1,\ldots,v_q\in K_0$ so that for all $n\ge 1$,
\begin{align}
&[-n\one,n\one]\cap\Gamma  \subset \Biggl\{y+\sum_{i=1}^p\alpha_iu_i+\sum_{j=1}^q\beta_jv_j:y\in[-M\one,M\one]\cap\Gamma, \label{l2.eq1}\\
&\;\;\;\;\;\;\;\;\;\;\;\;\;\;\;\;\;\;\;\;\;\;\;\;\;\;\;\;\;\;\;\;\;\;\;\;\;\;\;\;\;\;\alpha_i,\beta_j\in[-Mn,Mn]\cap\qbin\mbox{ for all }i,j\Biggr\}\,, \nonumber
\end{align}
where $q:=d-p$ and $\qbin:=\bigcup_{m=0}^\infty2^{-m}\bbz$ denotes the set of binary rationals.
\end{lemma}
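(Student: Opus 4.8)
The plan is to reduce Lemma~\ref{l2} to an elementary statement about the finite-index sublattice $F_0+K_0$ of $\bbz^d$, proved by linear algebra together with integer rounding. I would take $u_1,\dots,u_p$ to be a free basis of $F_0$ and $v_1,\dots,v_q$ a free basis of $K_0$; these will be the asserted generators. The first step is to verify that the $d$ vectors $u_1,\dots,u_p,v_1,\dots,v_q$ form an $\bbr$-basis of $\bbr^d$. Since $F_0\cap K_0=\{0\}$ and $\mathrm{rank}(F_0)+\mathrm{rank}(K_0)=p+q=d$ (both established in the proof of Proposition~\ref{p2}), a clearing-of-denominators argument shows that the $\mathbb{Q}$-linear spans of $F_0$ and $K_0$ meet only at the origin: any common element, after multiplication by a suitable integer, would lie in $F_0\cap K_0=\{0\}$. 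Hence these $d$ vectors are $\bbr$-linearly independent and thus a basis.

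Let $B$ be the integer matrix whose columns are $u_1,\dots,u_p,v_1,\dots,v_q$, which is invertible over $\bbr$ by the previous step. Given an arbitrary $w\in[-n\one,n\one]\cap\Gamma$, I would set $(a_1,\dots,a_p,b_1,\dots,b_q)^\top:=B^{-1}w$, so that $w=\sum_i a_iu_i+\sum_j b_jv_j$ with real coefficients satisfying $|a_i|,|b_j|\le Cn$, where $C:=\|B^{-1}\|_\infty$ depends only on the chosen bases. Round each coefficient to a nearest integer, choosing $\alpha_i,\beta_j\in\bbz$ with $|a_i-\alpha_i|\le\tfrac12$ and $|b_j-\beta_j|\le\tfrac12$, and put $y:=w-\sum_i\alpha_iu_i-\sum_j\beta_jv_j$.

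The crucial observation, and the only place the binary-rational structure enters, is that although the residual coefficients $a_i-\alpha_i$ need not be binary rationals, the vector $y$ itself lies in $\Gamma$: since $w\in\Gamma$ and $\sum_i\alpha_iu_i+\sum_j\beta_jv_j$ is an integer vector, $y$ is a difference of binary-rational vectors. Writing $y=\sum_i(a_i-\alpha_i)u_i+\sum_j(b_j-\beta_j)v_j$ and using the rounding bounds gives the scale-independent estimate $\|y\|_\infty\le\tfrac12\left(\sum_i\|u_i\|_\infty+\sum_j\|v_j\|_\infty\right)=:M_0$. Taking any integer $M\ge\max(M_0,C+1)$ then yields $y\in[-M\one,M\one]\cap\Gamma$ and $|\alpha_i|,|\beta_j|\le Cn+\tfrac12\le Mn$ for $n\ge1$, so that $\alpha_i,\beta_j\in[-Mn,Mn]\cap\bbz\subset[-Mn,Mn]\cap\qbin$; this is exactly the inclusion \eqref{l2.eq1}.

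I expect the main obstacle to be conceptual rather than computational: one must resist approximating $w$ by a \emph{binary-rational} combination of the $u_i,v_j$ (the exact real coefficients are rationals whose denominators may involve $\det B$ and hence odd factors), and instead round to \emph{integers}, letting the bounded remainder $y$ absorb the discrepancy while automatically staying in $\Gamma$. Once this is in place, the spanning step and the uniform rounding estimate are routine.
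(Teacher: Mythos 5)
Your argument is correct, but it reaches \eqref{l2.eq1} by a genuinely different route than the paper. The paper does not re-derive the covering from scratch: it quotes Lemma~5.2 of \cite{roy:samorodnitsky:2008}, which already provides $M^\prime$, finitely many translates $x_1,\ldots,x_l\in\bbz^d$ and generators $u_i\in F_0$, $v_j\in K_0$ covering $[-n\one,n\one]\cap\bbz^d$ with \emph{integer} coefficients, and then extends the covering from $\bbz^d$ to all of $\Gamma$ by induction on the dyadic level $m$, using the halving identity $[-n\one,n\one]\cap\Gamma_{m+1}\subset\frac12\bigl\{([-n\one,n\one]\cap\Gamma_m)+([-n\one,n\one]\cap\Gamma_m)\bigr\}$; this is the step that forces the coefficients to range over $\qbin$ rather than $\bbz$. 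You instead work directly in $\bbr^d$: the identity $p+q=d$ from the proof of Proposition~\ref{p2} together with $F_0\cap K_0=\{0\}$ makes $u_1,\ldots,u_p,v_1,\ldots,v_q$ a real basis, and coordinate extraction followed by nearest-integer rounding produces the decomposition in one stroke, with the remainder $y$ bounded independently of $n$ and lying in $\Gamma$ because the integer combination does. Your key observation --- round to integers rather than trying to approximate by binary-rational coefficients, since the exact coordinates may have odd denominators --- is exactly the right way to make this work. What your approach buys: it is self-contained (no appeal to the discrete lemma of \cite{roy:samorodnitsky:2008}), it treats all dyadic levels simultaneously without induction, it yields the slightly stronger conclusion $\alpha_i,\beta_j\in\bbz$, and it covers small $n$ without a separate remark (the quoted lemma only applies for $n\ge M^\prime$). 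What the paper's approach buys: it reuses machinery already needed elsewhere and keeps the continuous-parameter paper as a thin layer over the discrete one. Both proofs are valid and deliver a statement sufficient for the application in Proposition~\ref{p1}.
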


\begin{proof} [Proof of Proposition \ref{p1}]
(i) Choose $M$, $u_1,\ldots,u_p$ and $v_1,\ldots,v_q$ so that \eqref{l2.eq1} holds. Define
\begin{align*}
E&:=\Biggl\{y+\sum_{i=1}^p\alpha_iu_i+\sum_{j=1}^q\beta_jv_j:y\in[-M\one,M\one]\cap\Gamma,\\
&\,\,\,\,\,\,\,\,\,\,\,\,\,\;\;\;\;\;\;\alpha_i\in[0,M]\cap\qbin,\beta_j\in[0,1]\cap\qbin\mbox{ for all }i,j\Biggr\}\\
\intertext{and the sets}
E_m&:=\Biggl\{y+\sum_{i=1}^p\alpha_iu_i+\sum_{j=1}^q\beta_jv_j:y\in[-M\one,M\one]\cap\Gamma_m,\\
&\,\,\,\,\,\,\,\,\,\,\,\,\,\;\;\;\;\;\;\alpha_i\in[0,M]\cap2^{-m}\bbz,\beta_j\in[0,1]\cap2^{-m}\bbz\mbox{ for all }i,j\Biggr\},\,m\ge0\,.
\end{align*}
Clearly, $E$ is a bounded subset of $\Gamma$. By Proposition 10.2.1 and Theorem 10.2.3 of \cite{samorodnitsky:taqqu:1994} it follows that
$$
\int_S\sup_{t\in E}|f_t(x)|^\alpha\mu(dx)<\infty\,.
$$
Fix $\vep>0$. Since $E_m\uparrow E$, by the monotone convergence theorem there exists $m\ge0$ so that
\begin{equation*}
\int_S\max_{t\in E_m}|f_t(x)|^\alpha\mu(dx)\ge\int_S\sup_{t\in E}|f_t(x)|^\alpha\mu(dx)-\vep\,.
\end{equation*}
Fix such an $m$. For $k_1,\ldots,k_p\in\bbz$, define the sets
\begin{align}
\overline H(k_1,\ldots,k_p)&:=\Biggl\{y+\sum_{i=1}^p\alpha_iu_i+\sum_{j=1}^q\beta_jv_j:y\in[-M\one,M\one]\cap\Gamma,\nonumber\\
&\;\;\;\;\;\;\;\;\;\;\;\;\alpha_i\in[k_iM,(k_i+1)M]\cap\qbin,\beta_j\in\qbin\mbox{ for all }i,j\Biggr\}\label{eq:defH}\\
\intertext{and}
H(k_1,\ldots,k_p)&:=\Biggl\{y+\sum_{i=1}^p\alpha_iu_i+\sum_{j=1}^q\beta_jv_j:y\in[-M\one,M\one]\cap\Gamma_m,\nonumber\\
&\;\;\;\;\;\;\;\;\;\alpha_i\in[k_iM,(k_i+1)M]\cap2^{-m}\bbz,\beta_j\in2^{-m}\bbz\mbox{ for all }i,j\Biggr\}\,.\nonumber
\end{align}

Define for $n\ge1$,
\begin{align*}
\overline a_n&:=\int_S\sup\left\{|f_t(x)|^\alpha:t\in\bigcup_{k_1=-n}^{n-1}\ldots\bigcup_{k_p=-n}^{n-1}\overline H(k_1,\ldots,k_p)\right\}\mu(dx)\\
\intertext{and}
a_n&:=\int_S\max\left\{|f_t(x)|^\alpha:t\in\bigcup_{k_1=-n}^{n-1}\ldots\bigcup_{k_p=-n}^{n-1}H(k_1,\ldots,k_p)\right\}\mu(dx).
\end{align*}
Clearly by \eqref{l2.eq1},
$
b(n)^\alpha\le\overline a_n
$.
Note that for $n\ge 1$,
\begin{align*}
&\overline a_n-a_n\\
&\le\sum_{k_1=-n}^{n-1}\ldots\sum_{k_p=-n}^{n-1}\int_S\Biggl[\sup_{t\in\overline H(k_1,\ldots,k_p)}|f_t(x)|^\alpha\nonumber\\
&\,\,\,\,\,\,\,\,\,\,\,\,\,\,\,\,\,\,\,\,\,\,\,\,\,\,\,\,\,\,\,\,\,\,\,\,\,\,\,\,\,\,\,\,\,-\max_{t\in H(k_1,\ldots,k_p)}|f_t(x)|^\alpha\Biggr]\mu(dx).\\
\intertext{Note that all the $H(k_1,\ldots,k_p)$'s are simply translates of each other and therefore using Corollary~4.4.6 of \cite{samorodnitsky:taqqu:1994}, the right hand side of the above inequality equals}
&(2n)^p\int_S\Biggl[\sup_{t\in\overline H(0,\ldots,0)}|f_t(x)|^\alpha-\max_{t\in H(0,\ldots,0)}|f_t(x)|^\alpha\Biggr]\mu(dx)\\
&=(2n)^p\int_S\Biggl[\sup_{t\in E}|f_t(x)|^\alpha-\max_{t\in E_m}|f_t(x)|^\alpha\Biggr]\mu(dx) \le(2n)^p\vep\,.
\end{align*}

The above computations put together imply that
\begin{equation}\label{p1.eq13}
\limsup_{n\to\infty}n^{-p}b(n)^\alpha\le2^p\vep+\limsup_{n\to\infty}n^{-p}a_n\,.
\end{equation}
The next step is to show that
\begin{equation}\label{p1.eq11}
\lim_{n\to\infty} n^{-p} a_n=0\,.
\end{equation}
To that end, notice that
\begin{eqnarray*}
a_n&=&\int_S\max\left\{|f_t(x)|^\alpha:t\in\bigcup_{k_1=-n}^{n-1}\ldots\bigcup_{k_p=-n}^{n-1}H^\prime(k_1,\ldots,k_p)\right\}\mu(dx)\,,
\end{eqnarray*}
where
\begin{align*}
&H^\prime(k_1,\ldots,k_p):=\Biggl\{y+\sum_{i=1}^p\alpha_iu_i+\sum_{j=1}^q\beta_jv_j:y\in[-M\one,M\one]\cap\Gamma_m,\\
&\;\;\;\;\;\;\;\;\;\;\;\;\;\;\;\;\alpha_i\in[k_iM,(k_i+1)M]\cap2^{-m}\bbz,\beta_j\in[0,1]\cap2^{-m}\bbz\mbox{ for all }i,j\Biggr\}\,,
\end{align*}
for $k_1,\ldots,k_p\in\bbz$.
Clearly, there exists a positive integer $c$ so that
$$
\bigcup_{k_1=-n}^{n-1}\ldots\bigcup_{k_p=-n}^{n-1}H^\prime(k_1,\ldots,k_p)\subset[-cn\one,cn\one]\cap\Gamma_m\,,
$$
for all $n\ge1$. Thus,
\begin{equation}\label{p1.eq12}
a_n\le\int_S\max_{t\in[-cn\one,cn\one]\cap\Gamma_m}|f_t(x)|^\alpha\mu(dx)\,.
\end{equation}
By Lemma \ref{p0}, the group action $\{\phi_t:t\in F_m\}$ is conservative. An appeal to Proposition 5.1 of \cite{roy:samorodnitsky:2008} shows that the right hand side of \eqref{p1.eq12} is $o(n^p)$, and thus proves \eqref{p1.eq11}. This along with \eqref{p1.eq13} and the fact that $\vep$ there is arbitrary shows that
$$
\lim_{n\to\infty}n^{-p/\alpha}b(n)=0\,.
$$
The proof follows from here by the observation that
\begin{equation}\label{p1.eq14}
T^{-p/\alpha}b(T)\le\left(\frac{\lceil T\rceil}T\right)^{p/\alpha}\lceil T\rceil^{-p/\alpha}b(\lceil T\rceil)
\end{equation}
for all $T>0$.\\

\noindent (ii) As in the proof of Part (i), fix $M$, $u_1,\ldots,u_p$ and $v_1,\ldots,v_q$ so that \eqref{l2.eq1} holds. For $k_1,\ldots,k_p,l_1,\ldots,l_q\in\bbz$, define the set
\begin{align*}
&\tilde H(k_1,\ldots,k_p,l_1,\ldots,l_q):=\Biggl\{y+\sum_{i=1}^p\alpha_iu_i+\sum_{j=1}^q\beta_jv_j:
y\in[-M\one,M\one]\cap\Gamma,\\
&\;\;\;\;\;\;\alpha_i\in[k_iM,(k_i+1)M]\cap\qbin,
\beta_j\in[l_jM,(l_j+1)M]\cap\qbin\mbox{ for all }i,j\Biggl\}\,.
\end{align*}
A restatement of \eqref{l2.eq1} is that for $n\ge 1$,
\begin{equation}\label{p3.eq1}
[-n\one,n\one]\cap\Gamma\subset\bigcup_{-n\le k_1,\ldots,k_p,l_1,\ldots,l_q\le n-1}\tilde H(k_1,\ldots,k_p,l_1,\ldots,l_q)\,.
\end{equation}

Let
\begin{align*}
&G_n:=\Bigl\{(k_1,\ldots,k_p)\in\bbz^p:-n\le k_i\le n-1\mbox{ for all }i\mbox{ and}\\\
&\,\,\,\,\,\,\,\,\,\,\,\,\;\;\;\;\;\;\;\tilde H(k_1,\ldots,k_p,l_1,\ldots,l_q)\cap[-n\one,n\one]\neq\phi\mbox{ for some }l_1,\ldots,l_q\in\bbz\Bigl\}.\\
\intertext{Define for $m,n\ge1$,}
&\tilde a_n^{(m)}:=\int_S\sup\Biggl\{|f_t(x)|^\alpha:t\in\bigcup_{(k_1,\ldots,k_p)\in G_n}\bigcup_{l_1,\ldots,l_q\in\bbz}\\
&\;\;\;\;\;\;\;\;\;\;\;\;\;\;\;\;\;\;\;\;\;\;\;\;\;\;\;\;\;\;\;\;\;\;\;\;\;\;\;\;\;\;\;\;\;\;\;\;\;\;\;\;\;\;\;\Gamma_m\cap \tilde H(k_1,\ldots,k_p,l_1,\ldots,l_q)\Biggr\}\mu(dx)\,,\\
\intertext{and}
&\tilde a_n:=\int_S\sup\Biggl\{|f_t(x)|^\alpha:t\in\bigcup_{(k_1,\ldots,k_p)\in G_n}\bigcup_{l_1,\ldots,l_q\in\bbz}\\
&\;\;\;\;\;\;\;\;\;\;\;\;\;\;\;\;\;\;\;\;\;\;\;\;\;\;\;\;\;\;\;\;\;\;\;\;\;\;\;\;\;\;\;\;\;\;\;\;\;\;\;\;\;\;\;\;\;\;\;\;\;\;\;\tilde H(k_1,\ldots,k_p,l_1,\ldots,l_q)\Biggr\}\mu(dx)\,.
\end{align*}

Let $K$ be an integer which is no smaller than the diameter (with respect to the $L^\infty$ norm) of $\tilde H(0,\ldots,0)$.
Let for $m\ge1$ and $T>0$,
$$
\tilde b_m(T):=\int_S\sup_{t\in[-T\one,T\one]\cap\Gamma_m}|f_t(x)|^\alpha\mu(dx)\,.
$$
We claim that the proof will follow if the following can be shown:
\begin{equation}\label{p3.eq2}
b(n)^\alpha\le \tilde a_n\le b(n+K)^\alpha\mbox{ for all }n\ge1\,,
\end{equation}
\begin{equation}\label{p3.eq3}
\tilde b_m(n)^\alpha\le \tilde a_n^{(m)}\le \tilde b_m(n+K)^\alpha\mbox{ for all }m,n\ge1\,,
\end{equation}
and
\begin{equation}\label{p3.eq4}
\lim_{m\to\infty}\limsup_{n\to\infty}n^{-p}\left(\tilde a_n-\tilde a_n^{(m)}\right)=0\,.
\end{equation}
Suppose, for the moment, that the above is true. An appeal to Lemma \ref{p0} shows that for all $m\ge0$, the group action $\{\phi_t:t\in F_m\}$ is non-conservative. By Proposition 5.1 in \cite{roy:samorodnitsky:2008} it follows that
$$
\lim_{n\to\infty}n^{-p}\tilde b_m(n)^\alpha=A_m\in(0,\infty)\mbox{ for all }m\ge1\,.
$$
This, along with \eqref{p3.eq3} yields that
\begin{equation}\label{p3.eq5}
\lim_{n\to\infty}n^{-p}\tilde a_n^{(m)}=A_m\mbox{ for all }m\ge1\,.
\end{equation}
We next show that
$
A:=\sup_{m\ge1}A_m<\infty
$.
To that end, observe that by \eqref{p3.eq4}, there exists $m\ge1$ so that
$$
\limsup_{n\to\infty}n^{-p}\left(\tilde a_n-\tilde a_n^{(m)}\right)\le1\,.
$$
Fix such an $m$. Thus
$$
n^{-p}\left(\tilde a_n-\tilde a_n^{(m)}\right)\le2
$$
for $n$ large enough, and hence
$$
\limsup_{n\to\infty}n^{-p}\tilde a_n\le2+A_m\,.
$$
Observe that
$$
A\le\limsup_{n\to\infty}n^{-p}b(n)^\alpha\le\limsup_{n\to\infty}n^{-p}\tilde a_n\,,
$$
the second inequality following from \eqref{p3.eq2}. Thus, $A<\infty$. Since for all $m,n$,
$
\tilde b_m(n)\le\tilde b_{m+1}(n)
$,
it follows that $A_m\le A_{m+1}$. Hence,
$$
\lim_{m\to\infty}A_m=A\,.
$$
In view of \eqref{p3.eq4} and \eqref{p3.eq5}, this implies that
$$
\lim_{n\to\infty}n^{-p}\tilde a_n=A\,.
$$
As a restatement of \eqref{p3.eq2}, we have that
$$
a_{n-K}\le b(n)^\alpha\le \tilde a_n
$$
for $n>K$. Hence,
$$
\lim_{n\to\infty}n^{-p}b(n)^\alpha=A\,.
$$
An observation similar to \eqref{p1.eq14} will prove Part (ii) of Proposition~\ref{p1}.

We now show \eqref{p3.eq2} -\;\eqref{p3.eq4}. The first inequality in \eqref{p3.eq2} follows trivially from the observation that
\begin{equation*}
[-n\one,n\one]\cap\Gamma\subset\bigcup_{(k_1,\ldots,k_p)\in G_n}\bigcup_{l_1,\ldots,l_q\in\bbz}\tilde H(k_1,\ldots,k_p,l_1,\ldots,l_q)\,,
\end{equation*}
which in turn is a consequence of \eqref{p3.eq1}. For the second inequality, it suffices to show that for all
$$
t\in\bigcup_{(k_1,\ldots,k_p)\in G_n}\bigcup_{l_1,\ldots,l_q\in\bbz}\tilde H(k_1,\ldots,k_p,l_1,\ldots,l_q)\,,
$$
there exists $t^\prime\in\Gamma$ with $\|t^\prime\|\le n+K$ so that
\begin{equation}\label{p3.eq7}
f_t(x)=f_{t^\prime}(x)\mbox{ for all }x\in S\,,
\end{equation}
where $\|\cdot\|$ denotes the $L^\infty$ norm on $\bbr^d$. By choice of $t$, there exist\\
$(k_1,\ldots,k_p)\in G_n$ and $l_1,\ldots,l_q\in\bbz$ so that
$$
t\in \tilde H(k_1,\ldots,k_p,l_1,\ldots,l_q)\,.
$$
Clearly,
$$
t=y+\sum_{i=1}^p\alpha_iu_i+\sum_{j=1}^q\beta_jv_j
$$
for some $y\in[-M\one,M\one]\cap\Gamma$,
$\alpha_i\in[k_iM,(k_i+1)M]\cap\qbin$ and $\beta_j\in[l_jM,(l_j+1)M]\cap\qbin$.
Since $(k_1,\ldots,k_p)\in G_n$, there exist $l_1^\prime,\ldots,l_q^\prime\in\bbz$ so that
\begin{equation}\label{p3.eq6}
[-n\one,n\one]\cap \tilde H(k_1,\ldots,k_p,l_1^\prime,\ldots,l_q^\prime)\neq\phi\,.
\end{equation}
Define
$$
t^\prime:=y+\sum_{i=1}^p\alpha_iu_i+\sum_{j=1}^q\left\{\beta_j+(l^\prime_j-l_j)M\right\}v_j\,.
$$
Clearly, \eqref{p3.eq7} holds for this $t^\prime$. Thus, for the second inequality in \eqref{p3.eq2}, it suffices to show that
$$
\|t^\prime\|\le n+K\,.
$$
Since \eqref{p3.eq6} holds, there exists
$$
s\in[-n\one,n\one]\cap \tilde H(k_1,\ldots,k_p,l_1^\prime,\ldots,l_q^\prime)\,.
$$
It is easy to see that the diameters of $\tilde H(k_1,\ldots,k_p,l_1^\prime,\ldots,l_q^\prime)$ and $\tilde H(0,\ldots,0)$ are the same because one is a translate of the other. Thus, the diameter of the former is bounded by $K$. Notice that from the definition of $t^\prime$, it is immediate that
$
t^\prime\in \tilde H(k_1,\ldots,k_p,l_1^\prime,\ldots,l_q^\prime)
$.
Since $s$ also belongs to that set, it follows that
$
\|s-t^\prime\|\le K
$.
Clearly $\|s\|\le n$ because $s\in[-n\one,n\one]$. This shows that $\|t^\prime\|\le n+K$, and thus proves \eqref{p3.eq2}.
The justification for \eqref{p3.eq3} follows by similar arguments.

Finally, we proceed to prove \eqref{p3.eq4}. Fix $\vep>0$. Fix $m_0\ge0$ such that
$$
\int_S\max_{t\in \tilde H(0,\ldots,0)\cap\Gamma_{m_0}}|f_t(x)|^\alpha\mu(dx)\ge\int_S\sup_{t\in \tilde H(0,\ldots,0)}|f_t(x)|^\alpha\mu(dx)-\vep\,.
$$
Note that for $m\ge m_0$ and $n\ge1$,
\begin{align*}
&\tilde a_n-\tilde a_n^{(m)}\\
&=\int_S\Biggl[\max_{(k_1,\ldots,k_p)\in G_n}\sup\Bigl\{|f_t(x)|^\alpha:t\in\bigcup_{l_1,\ldots,l_q\in\bbz}\tilde H(k_1,\ldots,k_p,l_1,\ldots,l_q)\Bigr\}-\\
&\max_{(k_1,\ldots,k_p)\in G_n}\sup\Bigl\{|f_t(x)|^\alpha:t\in\bigcup_{l_1,\ldots,l_q\in\bbz}\Gamma_m\cap \tilde H(k_1,\ldots,k_p,l_1,\ldots,l_q)\Bigr\}\Biggr]\mu(dx)\\
&=\int_S\Biggl[\max_{(k_1,\ldots,k_p)\in G_n}\sup\Bigl\{|f_t(x)|^\alpha:t\in \tilde H(k_1,\ldots,k_p,0,\ldots,0)\Bigr\}-\\
&\;\;\;\;\;\;\;\;\;\;\;\;\;\max_{(k_1,\ldots,k_p)\in G_n}\max\Bigl\{|f_t(x)|^\alpha:t\in\Gamma_m\cap \tilde H(k_1,\ldots,k_p,0,\ldots,0)\Bigr\}\Biggr]\mu(dx)\\
&\le\sum_{(k_1,\ldots,k_p)\in G_n}\int_S\Biggl[\sup_{t\in \tilde H(k_1,\ldots,k_p,0,\ldots,0)}|f_t(x)|^\alpha-\\
&\,\,\,\,\,\,\,\,\,\,\,\,\;\;\;\;\;\;\;\;\;\;\;\;\;\;\;\;\;\;\;\;\;\;\;\;\;\;\;\;\;\;\;\;\;\;\;\;\;\;\;\;\;\;\max_{t\in\Gamma_m\cap \tilde H(k_1,\ldots,k_p,0,\ldots,0)}|f_t(x)|^\alpha\Biggr]\mu(dx)\\
&\le(2n)^p\int_S\Biggl[\sup_{t\in \tilde H(0,\ldots,0)}|f_t(x)|^\alpha-\max_{t\in\Gamma_m\cap \tilde H(0,\ldots,0)}|f_t(x)|^\alpha\Biggr]\mu(dx) \le(2n)^p\vep\,.
\end{align*}
Thus,
$$
\limsup_{m\to\infty}\limsup_{n\to\infty}\,n^{-p}\left(\tilde a_n-\tilde a_n^{(m)}\right)\le2^p\vep\,.
$$
Since, $\vep$ is arbitrary, this shows \eqref{p3.eq4} and completes the proof.
\end{proof}

Having established Proposition~\ref{p1}, we are now ready to prove Theorem~\ref{extreme.main}. Let $K_X$ denote the limit obtained in \eqref{limit:p1:noncons}. We start by proving \eqref{main.eq1}. In view of Proposition \ref{p1}, it suffices to show that
\begin{equation}\label{main.eq3}
b_t^{-1}M_t\Longrightarrow C_\alpha^{1/\alpha}Z_\alpha
\end{equation}
as $t\longrightarrow\infty$. For $t>0$, let $\eta_t$ be a probability measure on $(S,{\mathcal S})$ with
$$
\frac{d\eta_t}{d\mu}(x)=b_t^{-\alpha}\sup_{s\in[-t\one,t\one]\cap\Gamma}|f_s(x)|^\alpha
$$
for all $x\in S$. Let $U_j^{(t)}$, $j=1,2$, be independent $S$-valued random variables with common law $\eta_t$. By Theorem 4.1 in \cite{roy:2010b}, in order to establish \eqref{main.eq3}, it suffices to show that
\begin{align}
&\lim_{t\to\infty}P\Bigg(\mbox{for some }s\in[-t\one,t\one]\cap\Gamma, \nonumber\\
&\;\;\;\;\;\;\;\;\;\;\;\;\;\;\;\;\;\;\;\;\;\;\;\;\;\;\frac{|f_s(U_j^{(t)})|}{\sup_{u\in[-t\one,t\one]\cap\Gamma}|f_u(U_j^{(t)})|}>\vep,\,j=1,2\Bigg)=0 \label{main.eq4}
\end{align}
for all $\vep>0$. Using arguments given in \cite{samorodnitsky:2004b} (see the proof of (2.14) implies (2.12) therein) and with $\overline H(k_1,\ldots,k_p)$ as in \eqref{eq:defH}, we have that the  probability in \eqref{main.eq4} is bounded by
\begin{eqnarray*}
&&\sum_{k_1=-\lceil t\rceil}^{\lceil t\rceil-1}\ldots\sum_{k_p=-\lceil t\rceil}^{\lceil t\rceil-1}P\Biggl(\mbox{for some }s\in\overline H(k_1,\ldots,k_p),\\
&&\,\,\,\,\,\,\,\,\;\;\;\;\;\;\;\;\;\;\;\;\;\;\;\;\;\;\;\;\;\;\;\;\;\;\;\;\;\;\;\;\frac{|f_s(U_j^{(t)})|}{\sup_{u\in[-t\one,t\one]\cap\Gamma}|f_u(U_j^{(t)})|}>\vep,\,j=1,2\Biggr)\\
&&\le \left(2\lceil t \rceil\right)^p\vep^{-2\alpha}b_t^{-2\alpha}\left(\int_S\sup_{s\in\overline H(0,\ldots,0)}|f_s(x)|^\alpha m(dx)\right)^2 \to 0
\end{eqnarray*}
as $t \to \infty$ by \eqref{limit:p1:noncons}. This shows \eqref{main.eq4} and hence completes the proof of \eqref{main.eq1}.

That \eqref{main.eq2} is true, follows easily from Theorem 4.1 of \cite{roy:2010b} and Proposition \ref{p1} by an argument similar to the proof of (2.7) in \cite{samorodnitsky:2004b}.

\begin{remark} \label{remark:last}
{\rm It is easy to see that the above proof can be carried over to any finitely generated subgroup $\Gamma_0$ of $\bbr^d$ of rank $d$ and $\Gamma_n:=2^{-n}\Gamma_0$ for $n\ge1$. The subgroup $\Gamma_0$ may be suitably chosen to yield better results in some cases. Consider Example~\ref{example:nadkarni} once again. Taking $\Gamma_0=\{(i\sqrt2,j):i,j\in\bbz\}$ and using the estimate obtained in Example 6.3 in \cite{roy:samorodnitsky:2008}, the sharper bound $M_t=O_p(t^{1/\alpha})$ follows although choosing $\Gamma_0=\bbz^2$ would just give $M_t=o_p(t^{2/\alpha})$.
\\}
\end{remark}

\noindent \textbf{Acknowledgment. }The authors are thankful to Gennady Samorodnitsky for some helpful discussions.

\appendix
\section{Proofs of the Lemmas}

\subsection{Proof of Lemma~\ref{l1}} If possible, let $A$ be a wandering set of positive measure for $\{\phi_{ru}:u\in G\}$. By Proposition 2.2 in \cite{roy:samorodnitsky:2008}, it follows that
\begin{equation}\label{l1.eq1}
\sum_{t\in G}\one_A\circ\phi_t=\infty\mbox{ a.e. on }A\,.
\end{equation}
Suppose that $G$ has a generating set of size $k$. Since $A$ is a wandering set for $\{\phi_{ru}:u\in G\}$, by arguments similar to those in the proof of Theorem 3.4, page 18 in \cite{krengel:1985}, it follows that
\begin{eqnarray}
\sum_{t\in G}\one_A\circ\phi_t&\le&r^k\,.\label{p0.eq2}
\end{eqnarray}
Clearly, \eqref{l1.eq1} and \eqref{p0.eq2} contradict each other. This completes the proof.

\subsection{Proof of Lemma~\ref{p0}} We first show that for $i\ge I+1$, the action $\{\phi_t:t\in F_i\}$ is conservative. Without loss of generality we can and do assume that $I=0$, because otherwise, the elements of $\Gamma_I$ can be relabeled to become $\bbz^d$. We show that $\{\phi_t:t\in F_1\}$ is a conservative action. By similar arguments, the result will follow for all $i$. All that needs to be shown is that given $A\in\mathcal S$ with $\mu(A)>0$ there exist $s,t\in F_1$ with $s\neq t$ so that
\begin{equation}\label{p1.eq1}
\mu\left(\phi_s(A)\cap\phi_t(A)\right)>0\,.
\end{equation}
Fix such an $A$.
Let $r:=|\bar N_1| \geq 1$. By Lemma \ref{l1} it follows that $\{\phi_{rt}:t\in F_0\}$ is a conservative action. Thus, there exist $s^\prime,t^\prime\in F_0$ with $s^\prime\neq t^\prime$ so that
\begin{equation}\label{p1.eq2}
\mu\left(\phi_{rs^\prime}(A)\cap\phi_{rt^\prime}(A)\right)>0\,.
\end{equation}
Since $s^\prime\in F_0\subset\Gamma_0\subset\Gamma_1$, $\Phi_1(s^\prime)\in\Gamma_1/K_1$. Since
$$\Gamma_1/K_1=\overline F_1+\bar N_1\,,$$
there exist $\bar s\in\overline F_1$ and $\bar y\in\bar N_1$ so that $\Phi_1(s^\prime)=\bar s+\bar y$. Using the fact that $r\bar y=0$, it follows that
$$
\Phi_1(rs^\prime)=r\bar s\,.
$$
Define
$$s:=\Psi_1(r\bar s)\,.$$
Clearly, $s\in F_1$. Using \eqref{cd} with $i=1$ yields that
$$
\Phi_1(s)=r\bar s=\Phi_1(rs^\prime)\,.
$$
Thus, $s-rs^\prime\in K_1$ and hence
$$
\phi_s=\phi_{rs^\prime}\,.
$$
By similar arguments it follows that $\Phi_1(rt^\prime)\in\overline F_1$, and hence one can define
$$
t:=\Psi_1\circ\Phi_1(rt^\prime)\,.
$$
Once again, the same arguments as above will show that
$$
\phi_t=\phi_{rt^\prime}\,.
$$
An appeal to \eqref{p1.eq2} shows that \eqref{p1.eq1} holds with this choice of $s$ and $t$. To complete the proof, all that needs to be checked is that $s\neq t$. This is immediate because if $s$ and $t$ are equal then so are $\Phi_1(rs^\prime)$ and $\Phi_1(rt^\prime)$ and therefore,
$
rs^\prime-rt^\prime\in K_1 \cap F_0 \subseteq K_0 \cap F_0 =\{0\}
$
yielding $s^\prime=t^\prime$, which is a contradiction. This proves the fact that for $i\ge I+1$, the action $\{\phi_t:t\in F_i\}$ is conservative.

We now show that the action $\{\phi_t:t\in F_i\}$ is conservative for $i\le I$. Since $\{\phi_t:t\in F_I\}$ is a conservative action, by (5.1) of \cite{roy:samorodnitsky:2008}, it follows that
$$
\lim_{n\to\infty}n^{-p}\int_S\sup_{t\in[-n\one,n\one]\cap\Gamma_I}|f_t(x)|^\alpha\mu(dx)=0\,.
$$
An immediate consequence of this is that
$$
\lim_{n\to\infty}n^{-p}\int_S\sup_{t\in[-n\one,n\one]\cap\Gamma_i}|f_t(x)|^\alpha\mu(dx)=0\,,
$$
for all $0\le i<I$. From here, it follows that $\{\phi_t:t\in F_i\}$ is a conservative action, for otherwise, (5.2) of \cite{roy:samorodnitsky:2008} would be contradicted, which can easily be seen to hold for non-conservative actions as well (not only for dissipative actions) by decomposing the action into conservative and dissipative parts (see \cite{aaronson:1997}). This proves Lemma~\ref{p0}.

\subsection{Proof of Lemma~\ref{l2}}
Lemma 5.2 of \cite{roy:samorodnitsky:2008} shows the existence of a positive integer $M^\prime$, $x_1,\ldots,x_l \in \bbz^d$, $u_1,\ldots,u_p\in F_0$ and $v_1,\ldots,v_q\in K_0$ so that for all $n\ge M^\prime$,
\begin{align}
&[-n\one,n\one]\cap\bbz^d\subset \bigcup_{k=1}^l\Biggl\{x_k+\sum_{i=1}^p\alpha_iu_i+\sum_{j=1}^q\beta_jv_j:\label{l2.eq0}\\
&\;\;\;\;\;\;\;\;\;\;\;\;\;\;\;\;\;\;\;\;\;\;\;\;\;\;\;\;\;\;\;\;\;\;\;\;\;\;\;\;\;\alpha_i,\beta_j\in[-M^\prime n,M^\prime n]\cap\bbz\mbox{ for all }i,j\Biggr\}\,. \nonumber\\
\intertext{Define $M:=M^\prime \vee \|x_1\|\vee \|x_2\|\cdots\vee \|x_l\|$. In order to establish \eqref{l2.eq1}, it is enough to show that for all $m \geq 0$,}
&[-n\one,n\one]\cap\Gamma_m\subset\Biggl\{y+\sum_{i=1}^p\alpha_iu_i+\sum_{j=1}^q\beta_jv_j:y\in[-M\one,M\one]\cap\Gamma_m, \nonumber\\
&\;\;\;\;\;\;\;\;\;\;\;\;\;\;\;\;\;\;\;\;\;\;\;\;\;\;\;\;\;\;\;\;\;\;\;\;\;\;\;\;\;\alpha_i,\beta_j\in[-Mn,Mn]\cap2^{-m}\bbz\mbox{ for all }i,j\Biggr\},\nonumber
\end{align}
which follows by induction on $m \geq 0$ from $\eqref{l2.eq0}$ and the observation that
$$
[-n\one,n\one]\cap \Gamma_{m+1}\subset \frac{1}{2}\left\{\left([-n\one,n\one]\cap\Gamma_m\right)+\left([-n\one,n\one]\cap\Gamma_m\right) \right\}.
$$

\end{document}